\numberwithin{equation}{section}
\newtheorem{theorem}{Theorem}
\newtheorem{lemma}{Lemma}
\begin{document}
\title[Sets of Salem type]{Sets of Salem type and sharpness of the \\ $L^2$-Fourier restriction theorem}
\author{Xianghong Chen}
\address{X. Chen\\Department of Mathematics\\University of Wisconsin-Madison\\Madison, WI 53706, USA}
\curraddr{}
\email{xchen@math.wisc.edu}
\thanks{This research was supported in parts by NSF grants 0652890 and 1200261}
\subjclass[2010]{Primary 42A38, 42A99}
\keywords{Salem sets, Fourier restriction}
\dedicatory{}
\commby{}

\begin{abstract}
We construct Salem sets on the real line with endpoint Fourier decay and near-endpoint regularity properties. This complements a result of \L aba and Pramanik, who obtained near-endpoint Fourier decay and endpoint regularity properties. We then modify the construction to extend a theorem of Hambrook and \L aba to show sharpness of the $L^2$-Fourier restriction estimate by Mockenhaupt and Bak-Seeger, including the case where the Hausdorff and Fourier dimension do not coincide.
\end{abstract}
\maketitle

\section{Introduction and statement of results}
A compact set $E\subset\mathbb R^d$ of Hausdorff dimension $\alpha\in(0,d)$ is called a {\it Salem set} if there exists a Borel probability measures $\mu$ supported on $E$ satisfying the Fourier decay property
\begin{equation}\label{eq:fourier-decay-intro}
|\hat\mu(\xi)|\lesssim_\beta |\xi|^{-\beta/2}, \ \text{as } |\xi|\rightarrow\infty
\end{equation}
for all $\beta<\alpha$. It is known that $\beta$ must satisfy $\beta\le\alpha$ given the dimension constraint (see \cite{salem}; see also \cite[Corollary~8.7]{wolff}). In $d\ge 2$, a typical example of Salem set is the sphere $S^{d-1}$ for which \eqref{eq:fourier-decay-intro} holds with $\mu$ being the (normalized) surface measure and $\beta=\alpha=d-1$.

The existence of Salem sets in $d=1$ was first shown by Salem \cite{salem} (see also Bluhm \cite{bluhm} for a variant of Salem's construction). Different constructions were found later with more quantitative estimates near $\beta=\alpha$. See e.g. a construction of Kahane \cite{kahane} using Brownian motion, and a construction of Kaufman \cite{kaufman} using the prime number theorem. K\"orner \cite{koerner1978} constructed, using a bootstrapping argument, Salem sets of zero $\alpha$-dimensional Hausdorff measure that attain the endpoint Fourier decay rate. An argument using Baire category can be found in K\"orner \cite{koerner2011} where the constructed measures satisfy instead the upper regularity property
\begin{equation}\label{eq:ahlfors-intro}
\mu\big(B(x,r)\big)\lesssim r^\alpha,\ \ \forall x\in\mathbb R^d, r>0.
\end{equation}

In view of the spheres it is natural to ask for stronger regularity property
\begin{equation}\label{eq:AD-intro}
\mu\big(B(x,r)\big)\approx r^\alpha,\ \ \forall x\in E, 0<r<1
\end{equation}
for Salem sets to satisfy. Note that \eqref{eq:AD-intro} implies that $E$ is of Hausdorff and Minkowski dimension $\alpha$ (see e.g. \cite[Theorem~5.7]{mattila}). However, to the best of our knowledge, none of the above-mentioned constructions exactly verifies \eqref{eq:AD-intro}. Indeed, Mitsis \cite{mitsis} and Mattila \cite[Problem~20]{mattila} have asked whether there exist Salem sets in $d=1$ satisfying \eqref{eq:AD-intro} and \eqref{eq:fourier-decay-intro} with $\beta=\alpha$.

Progress along this line has been made by \L aba and Pramanik \cite[Section~6]{laba-pramanik} who construct measures satisfying \eqref{eq:AD-intro} and \eqref{eq:fourier-decay-intro} for all $\beta<\alpha$ (in fact more quantitative estimate is available near $\beta=\alpha$, see Theorem \ref{thm:theorem-3} below). Our first result complements theirs by constructing measures satisfying \eqref{eq:fourier-decay-intro} with $\beta=\alpha$ and a variant of \eqref{eq:AD-intro} with slightly higher dimensionality.

\begin{theorem}\label{thm:theorem-2}
Given $0<\alpha<1$, there exists a Borel probability measure $\mu$ on $\mathbb R$ supported on a compact set $E$ such that
\begin{equation}\label{eq:fourier-decay-2}
|\hat\mu(\xi)|\lesssim|\xi|^{-\alpha/2}
\end{equation}
for all $\xi\in\mathbb R, \xi\neq 0$, and
\begin{equation}\label{eq:regularity-2}
\mu(I)\approx\frac{|I|^\alpha} {\log(1/|I|)}
\end{equation}
for all intervals $I$ centered in $E$ with $|I|<1/2$.
\end{theorem}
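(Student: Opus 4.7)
The plan is to modify the randomized Cantor construction of Łaba and Pramanik (underlying Theorem \ref{thm:theorem-3}) so that the logarithmic loss in the Fourier decay of an Ahlfors-regular measure is shifted, scale by scale, into a corresponding $\log(1/|I|)^{-1}$ deficit in the spatial regularity.

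Concretely, I would work with a nested random Cantor construction: fix a sequence of scales $N_j$ and at each stage $j$ partition every surviving interval into $N_j$ equal subintervals, retaining $M_j$ of them uniformly at random. Assigning each retained stage-$j$ interval the mass $(M_1\cdots M_j)^{-1}$ produces, in the limit, a random probability measure $\mu$ on a random compact set $E$. Adapting the standard fourth-moment estimate of Łaba and Pramanik to this setting yields, with positive probability,
\begin{equation*}
|\hat\mu(\xi)|^2\lesssim \frac{(\log|\xi|)^{\theta}}{M_{j(\xi)}}
\end{equation*}
for some $\theta>0$, where $j(\xi)$ is the critical scale for $\xi$ (the scale at which the surviving intervals have length $\approx|\xi|^{-1}$). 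To obtain endpoint Fourier decay \eqref{eq:fourier-decay-2} without a logarithmic loss, I would oversample beyond the Ahlfors count $N_j^{\alpha}$ by the precise amount needed to absorb the $(\log|\xi|)^\theta$ factor; this necessarily decreases the mass of each retained interval. The parameters $(N_j,M_j)$ must then be chosen so that the cumulative spatial loss $\prod_{i\le j}N_i^\alpha/M_i$ matches $\log(1/|I_j|)$ up to multiplicative constants, yielding \eqref{eq:regularity-2} at the distinguished construction scales.

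The main obstacle is precisely this simultaneous calibration of $(N_j,M_j)$, together with the upgrade of the Fourier bound from an expected-value estimate to a uniform pointwise bound, and of the regularity bound from distinguished scales to all intervals centered in $E$ with $|I|<1/2$. The Fourier upgrade is carried out by a Borel-Cantelli argument over a dyadic net of frequencies, standard in Salem-type constructions; the regularity upgrade follows by interpolating between adjacent construction scales, using the uniform distribution of retained intervals within each parent cell. The heart of the proof is then to choose the exponent $\theta$, the growth of $N_j$, and the oversampling ratio $M_j/N_j^\alpha$ jointly so that the losses on the two sides exactly compensate, producing both \eqref{eq:fourier-decay-2} and \eqref{eq:regularity-2} on a single realization of the randomness.
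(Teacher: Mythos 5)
Your high-level idea is the same as the paper's: oversample the Cantor construction past the $\alpha$-Ahlfors count by a logarithmic factor, so that the $\log^{1/2}|\xi|$ loss in the probabilistic Fourier estimate is absorbed by the extra mass, at the cost of a $\log(1/|I|)^{-1}$ deficit in the regularity. That is exactly the mechanism behind Theorem~\ref{thm:theorem-2}. But you explicitly name ``the simultaneous calibration of $(N_j,M_j)$'' as the main obstacle and then leave it unresolved, and this is where the proposal has a genuine gap. If $N_j$ grows and you set $M_j$ by rounding $N_j^\alpha$ times an oversampling ratio, the multiplicative rounding errors accumulate uncontrollably; the paper's Remark (3) points this out explicitly, recording the estimate $e^{C_1 L^{1-\alpha}}\le \prod_{N=1}^L \lceil N^\alpha\rceil/N^\alpha \le e^{C_2 L^{1-\alpha}}$ to show why a naive $M$-out-of-$N$ scheme with growing $N$ fails. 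The device that actually makes the calibration work, and which your sketch does not supply, is the \emph{deficient-or-excessive-out-of-$N$} construction: fix the base $N=2$, write $t_N = 2^\alpha\theta_N$, and choose $t_N\in\{1,2\}$ so that the cumulative product $\theta_1\cdots\theta_N$ tracks the slowly varying target $\log(2^N8)$ to within bounded multiplicative error. Since $\theta_N$ is either $2^{-\alpha}<1$ or $2^{1-\alpha}>1$, one can always correct the running product up or down, so the error never accumulates. That single idea closes the calibration issue and is the paper's main innovation for this theorem; without it your plan does not obviously terminate.

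Beyond that, the implementation choices you list differ from the paper's but are not gaps: the paper bounds each random sum via Bernstein's inequality (Lemma~\ref{lem:bernstein}, giving a factor $T_{N-1}^{-1/2}\log^{1/2}(2^N8)$) and then takes a deterministic choice at each level by a union bound over the finitely many frequencies $k\in\{0,\dots,2^N-1\}$, extended by periodicity, rather than a fourth-moment estimate plus a Borel--Cantelli argument over a dyadic frequency net. Both routes are viable; the Bernstein-plus-union-bound route has the advantage of producing the construction level by level deterministically with no limiting probabilistic argument. Your regularity-interpolation step between adjacent construction scales is also fine and matches Lemmas~\ref{lem:upper-regular-2} and \ref{lem:lower-regular-2}. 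Also note a small sign slip: you want $\prod_{i\le j} M_i/N_i^\alpha \approx \log(1/|I_j|)$ (mass deficit), not $\prod N_i^\alpha/M_i$.
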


The following result is implicit in \cite[Section~6]{laba-pramanik} for dimensions of the form $\alpha = \log_N M$ (where $1 < M < N$ are integers); we generalize it to all $\alpha\in(0,1)$.

\begin{theorem}\label{thm:theorem-3}
Given $0<\alpha<1$, there exists a Borel probability measure $\mu$ on $\mathbb R$ supported on a compact set $E$ such that
\begin{equation}\label{eq:fourier-decay-3}
|\hat\mu(\xi)|\lesssim|\xi|^{-\alpha/2} {\log^{1/2}|\xi|}
\end{equation}
for all $\xi\in\mathbb R, |\xi|\ge 2$, and
\begin{equation}\label{eq:regularity-3}
\mu(I)\approx {|I|^\alpha}
\end{equation}
for all intervals $I$ centered in $E$ with $|I|<1$.
\end{theorem}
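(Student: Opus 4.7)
The plan is to adapt the random Cantor-type construction of \L aba--Pramanik, which is designed for $\alpha$ of the form $\log_N M$, by allowing the branching ratio to vary between two consecutive integers. Fix a large integer $N$, and greedily choose a sequence $\{M_k\}_{k\ge 1}\subset\{\lfloor N^\alpha\rfloor,\lceil N^\alpha\rceil\}$ so that
\begin{equation*}
c_1 N^{k\alpha}\le \prod_{j=1}^k M_j\le c_2 N^{k\alpha}\qquad\text{for all }k,
\end{equation*}
which is always possible since each new factor perturbs the cumulative product by at most $1+O(N^{-\alpha})$. Build nested compact sets $E_0=[0,1]\supset E_1\supset E_2\supset\cdots$ recursively: divide each length-$N^{-(k-1)}$ interval of $E_{k-1}$ into $N$ equal subintervals of length $N^{-k}$ and retain $M_k$ of them, chosen uniformly at random and independently over parent intervals and over $k$. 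Let $\mu_k=|E_k|^{-1}\mathbbm{1}_{E_k}\,dx$; since the $E_k$ are nested, $\mu_k$ converges weakly to a Borel probability measure $\mu$ supported on $E=\bigcap_k E_k$.

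The regularity \eqref{eq:regularity-3} is deterministic once the combinatorial construction is fixed. Every stage-$k$ interval has $\mu$-mass $(\prod_{j\le k} M_j)^{-1}\approx N^{-k\alpha}$. Given an interval $I$ centered at $x\in E$ with $|I|<1$, pick $k$ with $N^{-k}\le|I|/3<N^{-k+1}$. For the lower bound, the unique stage-$k$ interval containing $x$ is entirely inside $I$, contributing mass $\approx N^{-k\alpha}\approx|I|^\alpha$; for the upper bound, $I$ meets at most $O(N)$ stage-$k$ intervals, each of mass $\approx|I|^\alpha$, so $\mu(I)\lesssim|I|^\alpha$ with an $N$-dependent constant. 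This gives \eqref{eq:regularity-3}.

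The Fourier estimate \eqref{eq:fourier-decay-3} is the main point. Following the \L aba--Pramanik scheme, I would view the stage-$k$ selection inside each parent interval as $\binom{N}{M_k}$-uniform and write
\begin{equation*}
\hat\mu_k(\xi)-\hat\mu_{k-1}(\xi)=\sum_{J\text{ stage-}(k-1)}\mu(J)\,\bigl(Y_{J,k}(\xi)-\mathbb{E} Y_{J,k}(\xi)\bigr),
\end{equation*}
where $Y_{J,k}(\xi)$ is a suitable average of modulations of a fixed trigonometric polynomial supported on $J$. The summands are independent across $J$ and bounded by $\mu(J)\approx N^{-(k-1)\alpha}$; a Bernstein/Hoeffding bound then gives
\begin{equation*}
\mathbb{P}\bigl[|\hat\mu_k(\xi)-\hat\mu_{k-1}(\xi)|>t\bigr]\le 2\exp\bigl(-c\,t^2 N^{k\alpha}/M_k\bigr).
\end{equation*}
Taking $t\approx N^{-k\alpha/2}\sqrt{\log N^{k}}$, discretizing the frequency variable on an $N^{-k}$-net in $[-N^{2k},N^{2k}]$, and union bounding over $k$ and over the net, I show that with positive probability the desired bound $|\hat\mu_k(\xi)|\lesssim|\xi|^{-\alpha/2}\log^{1/2}|\xi|$ holds uniformly in $k$ and in $2\le|\xi|\le N^k$; smoothness of $\hat\mu_k$ (a trigonometric polynomial) upgrades this to all $\xi$ in that range, and letting $k\to\infty$ transfers the bound to $\hat\mu$. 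The main obstacle is the last step: adapting the \L aba--Pramanik probabilistic estimate to non-constant $M_k$. This succeeds because $M_k/N=N^{\alpha-1}(1+O(N^{-\alpha}))$, so the Bernstein variance and the $\ell^\infty$ bound at each stage are uniform in $k$, and the telescoping/union-bound argument yields the same $\log^{1/2}|\xi|$ loss as in the dyadic case.
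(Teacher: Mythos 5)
Your proposal is correct and is built on the same key idea as the paper: allow the per-stage branching number to vary between two adjacent integers, choose it greedily so that the cumulative product of branching ratios stays within a bounded factor of a geometric target $N^{k\alpha}$, and then get Fourier decay via Hoeffding/Bernstein and a union bound at each stage. The paper calls this the ``deficient-or-excessive-out-of-$N$'' construction and it is exactly your $M_k\in\{\lfloor N^\alpha\rfloor,\lceil N^\alpha\rceil\}$ device. The two versions differ in two implementation details. First, you fix a \emph{large} base $N$ and so your constants in \eqref{eq:fourier-decay-3} and \eqref{eq:regularity-3} inherit an $N$-dependence (indeed your union bound forces the Bernstein threshold constant to grow like $N^{\alpha/2}$); the paper simply takes base $2$, i.e.\ $M_k=t_k\in\{1,2\}$ with $t_k=2^\alpha\theta_k$ and $\theta_1\cdots\theta_k\approx 1$, which is the minimal and cleanest instance of the same trick and requires no large parameter. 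Second, for uniformity in the frequency variable you discretize on an $N^{-k}$-net and invoke a Lipschitz/smoothness upgrade, whereas the paper exploits that the relevant random exponential sum is $2^k$-periodic in an integer frequency (so only $2^k$ frequencies need be union-bounded) and then extends from $\mathbb Z$ to all $\xi\in\mathbb R$ using that $\mu$ is compactly supported in the open interval $(0,1)$, a standard fact referenced to Kahane. Both routes work; the paper's is slightly tighter (no extraneous net, no $N$-dependent losses), while yours is a more literal perturbation of the \L aba--Pramanik $M$-out-of-$N$ setup and makes clear the construction is insensitive to the choice of base.
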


\vspace{0.1cm}
It is well known that the Fourier decay and regularity properties of $\mu$ are related to the associated $L^2$-Fourier extension estimate
\begin{equation}\label{eq:fourier-extension-intro}
\|\widehat {f\mu}\|_{L^q(\mathbb R^d)}\lesssim \|f\|_{L^2(\mu)}.
\end{equation}
Here the implied constant is independent of the bounded Borel function $f$. By duality \eqref{eq:fourier-extension-intro} is equivalent to the $L^2$-Fourier restriction estimate
$$\|\hat g\|_{L^2(\mu)}\lesssim \|g\|_{L^{q'}(\mathbb R^d)}$$
where $q'$ is the conjugate exponent of $q$. It is known that $\|\hat\mu\|_q=\infty$ for all $q<2d/\alpha$ (see \cite{salem}; see also \cite[Corollary~8.7]{wolff}). Thus $q$ must satisfy $q\ge 2d/\alpha$. When $\mu$ is the surface measure on the sphere, the Stein-Tomas theorem \cite{tomas}, \cite{stein} states that \eqref{eq:fourier-extension-intro} holds for $\frac{2(d+1)}{d-1}\le q\le \infty$. The sharpness of this range is justified by Knapp's homogeneity argument which captures the ``curvedness'' of the sphere. It was shown by Mockenhaupt \cite{mockenhaupt} (see also Mitsis \cite{mitsis}) that Tomas's argument in \cite{tomas} can be used to show that \eqref{eq:fourier-extension-intro} holds for $$2+\frac{4(d-\alpha)}{\beta}=:q_0(\alpha,\beta)<q\le\infty$$
provided that $\mu$ satisfies \eqref{eq:fourier-decay-intro} and \eqref{eq:ahlfors-intro}. Bak and Seeger \cite{bak-seeger} extend this result to the endpoint $q_0$ and further strengthen \eqref{eq:fourier-extension-intro} by replacing $L^q$ with the Lorentz space $L^{q,2}$.

However, for general measures satisfying \eqref{eq:fourier-decay-intro} and \eqref{eq:ahlfors-intro}, there is no Knapp argument available to prove sharpness of $q_0$.\footnote{Indeed, using a general result from \cite{chen}, it is shown in \cite{chen-seeger} that for $\alpha=d/n$, $n=2,3,\cdots$, the range can be extended to ${2d}/{\alpha}\le q\le \infty$ for some $\alpha$-dimensional measures satisfying \eqref{eq:ahlfors-intro} and \eqref{eq:fourier-decay-intro} with $\beta=\alpha$.} Remarkably, by building arithmetic progressions into the Salem sets constructed by \L aba and Pramanik \cite{laba-pramanik}, Hambrook and \L aba \cite{hambrook-laba} show that the dependence of $q_0=q_0(\alpha,\alpha)$ on $\alpha$ is not improvable. More precisely, for any dimension of the form $\alpha={\log_N M}$, they construct a compact set $E\subset\mathbb R$ of dimension $\alpha$ and a Borel probability measure $\mu$ supported on $E$ satisfying \eqref{eq:ahlfors-intro} and \eqref{eq:fourier-decay-intro} for any $\beta<\alpha$; moreover, given $2\le q<q_0(\alpha,\alpha)$, they find such a measure $\mu$ and a sequence of indicator functions (of finite union of intervals) with $\|f_l\|_{L^2(\mu)}>0$ and
$$\sup_{l\ge 1}\frac{\|\widehat{f_l\mu}\big\|_{L^{q}(\mathbb R)}}{\|f_l\|_{L^2(\mu)}}=\infty.$$
Since the constructed measures depend on $q$, their result only shows sharpness of $q_0=q_0(\alpha,\alpha)$ at the level of its dependence on $\alpha$; that is, for any fixed example in their construction, the full range of $q$ for which \eqref{eq:fourier-extension-intro} holds is not known (even up to endpoint). Also, since \eqref{eq:fourier-decay-intro} is verified only for $\beta<\alpha$, the result of Bak and Seeger does not apply to $q_0(\alpha,\alpha)$.

Here, by modifying their construction, we find Salem sets for which \eqref{eq:fourier-decay-intro} is satisfied with $\beta=\alpha$ and the range of $q$ for which \eqref{eq:fourier-extension-intro} holds is precisely $q_0(\alpha,\alpha)\le q\le\infty$. We further extend this result to the case $q_0=q_0(\alpha,\beta)$, thereby showing the sharpness of Bak-Seeger's estimate in greater generality.

\begin{theorem}\label{thm:theorem-1}
Given $0<\beta\le\alpha<1$ and a nondecreasing function $\phi: [2,\infty)\rightarrow (0,\infty)$ satisfying $\lim_{t\rightarrow\infty}\phi(t)=\infty$ and $\phi(2t)\le \phi(t)+C$, there exist a Borel probability measure $\mu$ on $\mathbb R$ supported on a compact set $E$ and a sequence of indicator functions (of finite union of intervals) $\{f_l\}_{l\ge 1}$ with $\|f_l\|_{L^2(\mu)}>0$ so that the following estimates hold.
\begin{equation}\label{eq:fourier-decay}
|\hat\mu(\xi)|\lesssim|\xi|^{-\beta/2}
\end{equation}
for all $\xi\in\mathbb R, \xi\neq 0$.
\begin{equation}\label{eq:regularity}
\frac{|I|^\alpha} {\phi(1/|I|)\log(1/|I|)}\lesssim \mu(I)
\lesssim \frac{|I|^\alpha} {\log(1/|I|)}
\end{equation}
for all intervals $I$ centered in $E$ with $|I|<1/2$.
\begin{equation}\label{eq:sharpness}
\sup_{l\ge 1}\frac{\|\widehat{f_l\mu}\big\|_{L^{q}(\mathbb R)}}{\|f_l\|_{L^2(\mu)}}=\infty.
\end{equation}
for all $2\le q<2+\frac{4(1-\alpha)}{\beta}$.

Moreover, in the case $\beta<\alpha$, there exists such a measure $\mu$ satisfying
\begin{equation}\label{eq:regularity-4}
\frac{|I|^\alpha} {\phi(1/|I|)}\lesssim \mu(I)
\lesssim {|I|^\alpha}
\end{equation}
in place of \eqref{eq:regularity}, for all intervals $I$ centered in $E$ with $|I|<1/2$.
\end{theorem}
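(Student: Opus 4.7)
I combine the endpoint-Fourier-decay construction behind Theorem \ref{thm:theorem-2} with the arithmetic-progression insertion technique of Hambrook--\L aba. The set $E=\bigcap_{j\ge 1}E_j$ is built inductively, with $E_j$ a union of $P_j:=\prod_{i\le j}M_i$ intervals of length $\prod_{i\le j}N_i^{-1}$. At each step, the children of a parent interval are indexed by a random set $A_j\subset\{0,1,\ldots,N_j-1\}$ of size $M_j$, conditioned on containing a prescribed arithmetic progression $D_j$ of length $L_j$ and common difference $d_j$. I choose $M_j\approx N_j^\alpha/\log N_j$ to reproduce the logarithmic loss in regularity from Theorem \ref{thm:theorem-2}, and I choose $L_j$ to be the largest length still consistent with endpoint $\beta$-Fourier decay; the function $\phi$ controls exactly how small this slack must be. The common difference $d_j$ is chosen so that the embedded AP is transverse to the multi-scale structure.

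\textbf{Verification of \eqref{eq:fourier-decay} and \eqref{eq:regularity}.} The measure $\mu$ is the weak-$*$ limit of the uniform probability measures $\mu_j$ on $E_j$; its Fourier transform factors over scales, each factor being the normalized exponential sum $\sigma_{A_i}(\eta)=|A_i|^{-1}\sum_{a\in A_i}e^{-2\pi i a\eta}$ evaluated at the appropriate rescaling of $\xi$. For $|\xi|\in[N_j,N_{j+1})$, I split $\sigma_{A_i}$ into an AP part carrying weight $|D_i|/|A_i|$ and a residual random part; the AP piece is bounded trivially, and the random residual is estimated by the Salem--Bluhm--\L aba--Pramanik large-deviation machinery. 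With $N_j$ sufficiently lacunary, this yields \eqref{eq:fourier-decay} at the endpoint $\beta$. The upper bound in \eqref{eq:regularity} is immediate from the uniform distribution at each step: $\mu(I)\le P_j^{-1}\approx|I|^\alpha/\log(1/|I|)$ whenever $|I|\approx\prod_{i\le j}N_i^{-1}$; the matching lower bound uses the $\phi(1/|I|)$ slack to cover the mass fluctuation at intermediate scales introduced by the AP insertions and by interval sizes falling between construction scales.

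\textbf{Sharpness and the $\beta<\alpha$ case.} For \eqref{eq:sharpness} I take $f_l=\mathbf 1_{F_l}$, where $F_l$ is the union of the $L_l$ level-$l$ subintervals inside a single level-$(l-1)$ parent whose residues form the AP $D_l$. Then $\|f_l\|_{L^2(\mu)}^2\approx L_l/(P_{l-1}M_l)$. On the Fourier side, $\widehat{f_l\mu}$ factors as a Dirichlet-type kernel attached to $D_l$ times a residual transform; the kernel produces coherent peaks of width $\approx(L_l d_l\prod_{i<l}N_i^{-1})^{-1}$ at frequencies near $k d_l^{-1}\prod_{i<l}N_i$. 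Computing the $L^q$ contribution of these peaks and using \eqref{eq:fourier-decay} on the residual factor, the ratio in \eqref{eq:sharpness} diverges as $l\to\infty$ precisely in the range $q<q_0(\alpha,\beta)$. For the case $\beta<\alpha$ one replaces the template of Theorem \ref{thm:theorem-2} with that of Theorem \ref{thm:theorem-3}, taking $M_j\approx N_j^\alpha$ (no $\log$-loss); this yields \eqref{eq:regularity-4} without changing the rest of the argument. The main technical obstacle is pushing the Fourier decay to the \emph{endpoint} $\beta$ while retaining the full Knapp obstruction up to $q_0(\alpha,\beta)$: the embedded AP must be long enough to force the sharpness but short enough not to spoil the endpoint Salem-type estimate, and it is the function $\phi$ that provides precisely the amount of slack needed to satisfy both constraints.
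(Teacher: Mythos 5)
Your high-level plan --- combining the endpoint-decay machinery of Theorem~\ref{thm:theorem-2} with Hambrook--\L aba's embedded arithmetic progressions --- is the right one, but several steps as stated do not work. The choice $M_j\approx N_j^{\alpha}/\log N_j$ has the wrong sign: to realize $\mu(I)\lesssim|I|^{\alpha}/\log(1/|I|)$ the set must be \emph{denser} than $\alpha$-dimensional, so the count of level-$j$ intervals should grow \emph{faster} than $\big(\prod_{i\le j}N_i\big)^{\alpha}$, not slower. With your formula, $\mu$ of a level-$j$ interval $J$ is $1/\prod M_i\approx|J|^{\alpha}\prod_{i\le j}\log N_i\gg|J|^{\alpha}$, violating the required upper bound. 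Even after fixing the sign, a per-step density rule with rounded integers is precisely the approach the paper rules out in Remark~(3) at the end of Section~\ref{sec:theorem-1}: rounding errors compound to size $e^{cL^{1-\alpha}}$ over $L$ steps, overwhelming the logarithmic factor you want. The paper's mechanism is the deficient-or-excessive device: with ambient base $\psi(N)\approx\phi(2^N)^{1/2}$ one sets $t_N=\psi(N)^{\alpha}\theta_N$ with $\theta_N$ confined to a bounded range but chosen so that the \emph{cumulative} product $\theta_1\cdots\theta_{N-1}$ tracks $\psi(N)^{\alpha}\log(8\Psi(N))$; see \eqref{want1-11}. There is no per-scale formula. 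Relatedly, the role of $\phi$ is mislocated: $\phi$ governs the growth of the ambient base $\psi(N)$, which simultaneously lets the generalized-AP branching $\tau_N\approx\psi(N)^{\alpha-\beta/2}$ grow (driving the super-geometric blow-up in \eqref{eq:contradiction} that yields \eqref{eq:sharpness}) and costs a factor $\psi(N)^2\lesssim\phi(1/|I|)$ in the regularity lower bound, as in Lemma~\ref{regular-2}.

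The sharpness step also needs a different argument. You propose to factor $\widehat{f_l\mu}$ as a Dirichlet kernel attached to $D_l$ times a ``residual transform'' and read off $L^q$ mass from the coherent peaks. No such factorization holds: $\mu$ restricted to each level-$l$ interval of $F_l$ carries an \emph{independent} random subtree, so the putative residual factor differs from interval to interval and can damp or shift the peaks; moreover, invoking the upper bound \eqref{eq:fourier-decay} on a residual factor cannot produce a lower bound on the product. As the introduction notes, no Knapp-type argument is available here. The robust route, used in the paper and in Hambrook--\L aba, is Salem's trick: lower-bound $\|\widehat{f_l\mu_N}\|_{L^{2r}}^{2r}$ by the additive energy $M_{l,N,r}$ of $F_l\cap A_N$, which is bounded below via Cauchy--Schwarz and a sumset-size estimate as in \eqref{z-bound}--\eqref{norm-below}; pass to $N\to\infty$ by dominated convergence using the uniform bound \eqref{dominating-function}; and interpolate from $L^{2r}$ down to $L^q$ via $\|\widehat{f_l\mu}\|_{\infty}\le\|f_l\mu\|_{\mathcal M(\mathbb R)}$, as in \eqref{norm-below-q}. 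This uses only representation counts and is insensitive to peak locations, which is exactly what makes the argument close.
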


One can take, for example, $\phi(t)=\log^{\epsilon}t$. In this case both \eqref{eq:regularity} and \eqref{eq:regularity-4} imply
$$|I|^{\alpha+\delta}\lesssim_{\delta} \mu(I)\lesssim |I|^\alpha$$
for all $\delta>0$ and all intervals $I$ centered in $E$ with $|I|<1/2$. In particular, $E$ is of Hausdorff and Minkowski dimension $\alpha$.

Our construction differs from that of \L aba-Pramanik \cite{laba-pramanik} and Hambrook-\L aba \cite{hambrook-laba} in two main aspects. First, to obtain the Fourier decay property \eqref{eq:fourier-decay-intro} with $\beta=\alpha$, we increase the dimensionality of the Salem set by a logarithmic scale. This causes some difficulty as the logarithmic factor does not fit directly into the $N^\alpha$-out-of-$N$ Cantor-type construction utilized in \cite{laba-pramanik}. Our main innovation is to replace such a construction with a deficient-or-excessive-out-of-$N$ construction which turns out to be rather convenient and flexible. Second, to obtain sharpness of the Fourier extension estimate \eqref{eq:fourier-extension-intro} in exactly the range $q_0(\alpha,\beta)\le q\le\infty$, instead of keeping $N$ constant we let $N$ grow slowly to infinity (quantified by $\phi$) in order to build in arithmetic progressions of growing length. This comes at the expense of a slight weakening of the regularity estimates, as shown in \eqref{eq:regularity} and \eqref{eq:regularity-4}.

\vspace{0.1cm}
The plan of the paper is as follows. In Section \ref{sec:theorem-2} we present the proof of Theorem \ref{thm:theorem-2} which can also serve as a good warm-up for the proof of Theorem \ref{thm:theorem-1}. In Section \ref{sec:theorem-3} we prove Theorem \ref{thm:theorem-3} which follows quickly from the proof of Theorem \ref{thm:theorem-2} by adjusting parameters. In Section \ref{sec:theorem-1} we prove Theorem \ref{thm:theorem-1} following closely the argument in \cite{hambrook-laba} with some simplifications.

\vspace{0.1cm}
\emph{Notation.} We write $\square_1\lesssim \square_2$ to indicate that $\square_1\le C\square_2$ for some constant $0<C<\infty$ independent of the testing inputs which will usually be clear from the context. By $\square_1\approx\square_2$ we mean both $\square_1\lesssim\square_2$ and $\square_2\lesssim\square_1$ hold. For a finite Borel measure $\mu$ on $\mathbb R^d$, we write for $\xi\in\mathbb R^d$
$$\hat\mu(\xi)=\mathcal F(\mu)(\xi)=\int_{\mathbb R^d}e^{-2\pi i\langle\xi,x\rangle}d\mu(x)$$
where $\langle\xi, x\rangle$ is the Euclidean inner product. We say that a Borel probability measure $\mu$ is supported on a set $E$ if $\mu(E)=1$. We denote by $[N]$ the set $\{0,\cdots,N-1\}\subset \mathbb R$ and $[N]/n=\{0,1/n,\cdots,(N-1)/n\}$. $\log(\cdot)$ denotes the logarithmic function with base $e$.
\section{Proof of Theorem \ref{thm:theorem-2}}\label{sec:theorem-2}

\subsection{Outline of the construction}
Given $0<\alpha<1$ and a sequence of ``branching numbers'' (to be chosen in Section \ref{sec:choosing-2})
$$\{t_N\}_{N\ge 1} \text{ with } t_N\in \{1,2\},$$
we will construct inductively a sequence of ``dyadic nodes'' (i.e. subsets of the dyadic rationals) $A_0, A_1, \cdots$ on $[0,1]$ with
\begin{align*}
&\bullet\ \  A_0=\{0\}\\
&\bullet\ \  A_N=\bigcup_{a\in A_{N-1}}A_{N,a}+a\\
&\bullet\ \  A_{N,a}\subset \{0,1\}/2^N\\
&\bullet\ \  \# A_{N,a}=t_N.
\end{align*}
Note that
$$A_N\subset \{0,1,\cdots,2^N-1\}/2^N.$$

Our main observation is that, although the dimension $\alpha$ is not reflected at every single stage of branching, the right dimensionality can still be achieved as long as
$$T_N:=\#A_N=t_1\cdots t_N$$
has the right magnitude (see Section \ref{sec:choosing-2} for precise formulation).

Given the set $A_N$, we write the union of the associated intervals
$$E_N=\bigcup_{a\in A_{N}}[0,1/2^N]+a$$
and the associated uniform probability measure
$$\mu_N=\frac{1}{|E_N|}\mathds{1}_{E_N}(t)dt.$$
And let
$$E=\bigcap_{N=1}^\infty E_N$$
be the Cantor set on which the Cantor measure
$$\mu=\lim_{N\rightarrow\infty} \mu_N$$
is supported, where the limit is taken in the weak sense (the existence of the weak limit is standard, see Section \ref{sec:limit} below).

\subsection{Existence of the weak limit}\label{sec:limit}
Denote by
$$F_N(t)=\mu_N(-\infty,t]$$
the cumulative distribution function of $\mu_N$. Then by inspection we have
$$\|F_{N+1}-F_N\|_\infty\le T_N^{-1}.$$
By our choice of $\{t_N\}_{N\ge 1}$ in Section \ref{sec:choosing-2},
$$T_N\approx 2^{N\alpha}N.$$
Thus the continuous functions $F_N$ converge uniformly to a cumulative distribution function of a probability measure $\mu$, which implies that $\mu_N$ converges weakly to $\mu$. See e.g. \cite[Section~3.2]{durrett}.

\subsection{The iteration}\label{sec:iteration}
Given a large integer $N_0$ (to be chosen in Section \ref{sec:choosing-2}), we choose $A_1, \cdots,A_{N_0}$ arbitrarily as long as $E_{N_0}$ is compactly supported in $(0,1)$. Assuming that $A_1,\cdots, A_{N-1}$ have been chosen, we now choose $A_{N}$.

Let $A\subset\mathbb R$ be a finite set, we will write for $k\in \mathbb Z$
$$S_A(k)=\sum_{a\in A}e^{-2\pi i ak}.$$
The Fourier decay property \eqref{eq:fourier-decay-2} will follow if we have an efficient bound on $|\hat\mu_N(k)-\hat\mu_{N-1}(k)|$. This reduces to bounding a ``sparse sum'' of the form
$$\sum_{a\in A_{N-1}} \Big(\frac{S_{A_N,a}(k)}{t_N}-\frac{S_{\{0,1\}/2^N}(k)}{2}\Big) e^{-2\pi iak}$$
where $k=1,\cdots,2^N-1$. When $t_N=2$ this is trivial since each term is equal to $0$. When $t_N=1$, we need to choose $A_{N,a}=\{0\}/2^N$ or $\{1\}/2^N$ for each $a\in A_{N-1}$ to obtain a favorable bound. To this end we use the random selection employed in \cite{laba-pramanik}.

More precisely, let
$$B_N=\{0,1,\cdots,t_N-1\}/2^N.$$
We write for $x\in \{0,1\}$
$$B_{N,x}=\big\{{y+x\ (\text{mod } 2)}: y/2^N\in B_N\big\}/{2^N}\subset \{0,1\}/2^N,$$
the translation of $B_N$ by $x$ within $\{0,1\}/2^N$. Clearly,
$$\Big|\frac{S_{B_N,x}(k)}{t_N}-\frac{S_{\{0,1\}/2^N}(k)}{2}\Big|\le 2.$$
For fixed $k\in\mathbb Z$, consider the random variables
$$\chi_a(k)=\Big(\frac{S_{B_N,x(a)}(k)}{t_N}-\frac{S_{\{0,1\}/2^N}(k)}{2}\Big) e^{-2\pi iak},\ a\in A_{N-1}$$
where $\{x(a)\}_{a\in A_{N-1}}$ are independent random variables uniformly distributed on $\{0,1\}$.\footnote{Here we are omitting the explicit construction of the probability space. One can take, for example, the sample space $\Omega=\prod_{a\in A_{N-1}}\{0,1\}$ equipped with the uniform probability measure, and the random variables $x(a)=\pi_a$ (the projection to the $a$-th coordinate).} To bound the sum $\sum_{a\in A_{N-1}}\chi_a(k)$ we invoke Bernstein's inequality. For a proof, see e.g. \cite{hoeffding}.

\begin{lemma}[Bernstein's inequality]\label{lem:bernstein}
Let $X_1,\cdots,X_n$ be independent complex-valued random variables with $\mathbb EX_j=0$ and $|X_j|\le M\in (0,\infty)$ for all $j=1,\cdots,n$. Then for $0<t\le 1$,
$$\mathbb P\Big(\Big|\frac{1}{n}\sum_{j=1}^n X_j\Big|\ge Mt \Big)\le 4 e^{-{nt^2}/{4}}.$$
\end{lemma}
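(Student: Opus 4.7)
The plan is to run the standard Chernoff-bound argument: reduce to real-valued random variables, then apply the moment generating function method and optimize.

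For the complex case I would decompose $X_j = Y_j + i Z_j$ into real and imaginary parts. Both $Y_j$ and $Z_j$ are real, mean-zero, and bounded by $M$ in absolute value (since $|X_j|\le M$). The event $\{|\sum_j X_j|\ge nMt\}$ forces $|\sum_j Y_j|\ge nMt/\sqrt{2}$ or $|\sum_j Z_j|\ge nMt/\sqrt{2}$, so by a union bound it suffices to prove a two-sided tail estimate of the form $2 e^{-nt^2/4}$ for each real sum separately. This yields the factor of $4$ in the stated bound.

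The real case is handled by the classical Hoeffding lemma: for real $X$ with $\mathbb E X = 0$ and $|X|\le M$, one has $\mathbb E e^{\lambda X}\le e^{\lambda^2 M^2/2}$ for every $\lambda\in\mathbb R$. This is the only substantive step; the proof is the standard convexity argument — write $e^{\lambda X}$ as a convex combination of $e^{\pm\lambda M}$ along the chord through $(\pm M, e^{\pm \lambda M})$, take expectations (using $\mathbb E X=0$) to obtain $\cosh(\lambda M)$, and then invoke the elementary Taylor bound $\cosh(y)\le e^{y^2/2}$. Independence then gives $\mathbb E e^{\lambda \sum_j X_j}\le e^{n\lambda^2 M^2/2}$, and Markov's inequality together with its analogue for $-\sum_j X_j$ yields $\mathbb P(|\sum_j X_j|\ge s)\le 2 e^{-\lambda s + n\lambda^2 M^2/2}$. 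Optimizing in $\lambda$ (the minimum is at $\lambda = s/(nM^2)$) gives $2 e^{-s^2/(2nM^2)}$.

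Setting $s = nMt/\sqrt{2}$ turns this into the desired $2 e^{-nt^2/4}$ for each of the real and imaginary parts, which combined with the reduction above produces the claimed $4 e^{-nt^2/4}$ bound. The only nontrivial ingredient is Hoeffding's lemma itself; no serious obstacle arises. I note also that the hypothesis $0<t\le 1$ plays no role in the argument and appears to be included only because the bound is trivially true once $t$ is large.
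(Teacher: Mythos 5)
Your proof is correct. The paper does not actually give a proof of this lemma---it simply cites Hoeffding's 1963 paper---so there is no internal argument to compare against. Your write-up is the standard route: Hoeffding's lemma $\mathbb E e^{\lambda X}\le e^{\lambda^2 M^2/2}$ for mean-zero real $X$ with $|X|\le M$, the Chernoff/Markov optimization giving $\mathbb P(|\sum_j X_j|\ge s)\le 2e^{-s^2/(2nM^2)}$, and the reduction of the complex case to the real case by splitting into real and imaginary parts, using $|\sum X_j|^2 = |\sum Y_j|^2 + |\sum Z_j|^2$ and a union bound, which is exactly where the factor $4$ and the $\sqrt{2}$ loss (hence the constant $1/4$ instead of $1/2$ in the exponent) come from. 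The arithmetic checks out: with $s=nMt/\sqrt2$ the exponent $s^2/(2nM^2)$ becomes $nt^2/4$. Your observation that the hypothesis $0<t\le1$ is superfluous is also correct and agrees with the paper's own Remark (1), which notes that this restriction can be removed.
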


Notice that $\{\chi_a(k)\}_{a\in A_{N-1}}$ are independent random variables with
$$\mathbb E\chi_a(k)=0$$
and  $|\chi_a(k)|\le 2$. So we can choose
$$t^2=T_{N-1}^{-1}4\log(2^N 8)$$
in Lemma \ref{lem:bernstein} to obtain
$$\mathbb P
\Big(\Big|\frac{1}{T_{N-1}}\sum_{a\in A_{N-1}}\chi_a(k)\Big|
\ge {4}{T_{N-1}^{-1/2}}\log^{1/2}(2^N 8)
\Big)
\le \frac{1}{2^N2},$$
provided that
\begin{equation}\label{eq:bernstein-cond-2}
4\log(2^N 8)\le T_{N-1}
\end{equation}
which will be guaranteed by our choice of $t_N$ and $N_0$ (see Section \ref{sec:choosing-2}). Allowing $k$ to vary, we see that we can choose $x(a)$ in such a way that
\begin{align}\label{eq:key-est-pre-2}
\Big|\frac{1}{T_{N-1}}\sum_{a\in A_{N-1}}\chi_a(k)\Big|
\le 4{T_{N-1}^{-1/2}}\log^{1/2}(2^N8)
\end{align}
for $k=0,1,\cdots, 2^N-1$. By periodicity, this bound extends to all $k\in\mathbb Z$.

Now we set $A_{N,a}=B_{N,x(a)}$ and
$$A_N=\bigcup_{a\in A_{N-1}}A_{N,a}+a.$$
This finishes our choice of $A_N$.

\subsection{{Choosing $t_N$ and $N_0$}}\label{sec:choosing-2}
We now choose $t_N$ and $N_0$. The way we choose $t_N$ is the key ingredient of our construction. The same idea will be used in the proof of Theorem \ref{thm:theorem-3} and Theorem \ref{thm:theorem-1}. From now on till the end of Section \ref{sec:theorem-3}, all implied constants depend only on $\alpha$.

In view of \eqref{eq:key-est-pre-2}, we want
\begin{align}\label{want1-2}
{T_{N-1}^{-1/2}}\log^{1/2}(2^N 8)\approx {2^{-N\alpha/2}}
\end{align}
for all $N>N_0$. If we write
$$t_N=2^{\alpha}\theta_N,$$
then \eqref{want1-2} reduces to
\begin{align}\label{eq:asympt-2}
\log(2^N8)\approx \theta_1\cdots\theta_N.
\end{align}
Since $\theta_N$ is either $2^{-\alpha}$ or $2^{1-\alpha}$, this can be guaranteed by choosing appropriate sequence $t_N\in\{1,2\}$. This finishes our choice of $t_N$. One can then choose $N_0$ large enough so that \eqref{eq:bernstein-cond-2} is satisfied.

\subsection{Fourier decay estimate}\label{sec:fourier-decay-2}
We can now prove \eqref{eq:fourier-decay-2}. By \eqref{eq:key-est-pre-2} and \eqref{want1-2},
$$\Big|\frac{1}{T_{N-1}}\sum_{a\in A_{N-1}}\chi_a(k)\Big|\lesssim {2^{-N\alpha/2}}$$
holds for all $N>N_0$ and $k\in\mathbb Z$. Consequently, for $k\neq 0$,
\begin{align*}
|\hat\mu_N(k)-\hat\mu_{N-1}(k)|
&= \Big|\mathcal F\big(2^N\mathds{1}_{[0,1/2^N)}\big)(k)\Big|\Big|\frac{1}{T_{N-1}}\sum_{a\in A_{N-1}}\chi_a(k)\Big|\\
& \lesssim \min\Big(1,\frac{2^N}{|k|}\Big)\Big|\frac{1}{T_{N-1}}\sum_{a\in A_{N-1}}\chi_a(k)\Big|\\
& \lesssim \min\Big(1,\frac{2^N}{|k|}\Big)\frac{1}{2^{N\alpha/2}}.
\end{align*}
To bound the sum in $N$, we write
\begin{align*}
\sum_{N=1}^\infty\min\Big(1,\frac{2^N}{|k|}\Big)\frac{1}{2^{N\alpha/2}}
&= \sum_{2^N\le |k|}\frac{2^N}{|k|}\frac{1}{2^{N\alpha/2}}+\sum_{2^N>|k|}\frac{1}{2^{N\alpha/2}}\\
&= \frac{1}{|k|}\sum_{N\le N_1}2^{N(1-\alpha/2)}+\sum_{N\ge N_2}{2^{-N\alpha/2}}
\end{align*}
where $N_1$ is the largest integer such that $2^{N_1}\le |k|$ and $N_2$ is the smallest integer such that $2^{N_2}> |k|$. Since
$$\sum_{N\le N_1}2^{N(1-\alpha/2)}\lesssim 2^{N_1(1-\alpha/2)}$$
and
$$\sum_{N\ge N_2} {2^{-N\alpha/2}}\lesssim {2^{-N_2\alpha/2}},$$
we get
\begin{align*}
\sum_{N=1}^\infty\min\Big(1,\frac{2^N}{|k|}\Big)\frac{1}{2^{N\alpha/2}}
&\lesssim {|k|^{-\alpha/2}}\frac{2^{N_1(1-\alpha/2)}}{|k|^{1-\alpha/2}}+{2^{-N_2\alpha/2}}\\
&\lesssim {|k|^{-\alpha/2}}.
\end{align*}
Now notice that, by the weak convergence of $\mu_N$, we have
$$\hat\mu(k)=\lim_{N\rightarrow\infty}\hat\mu_N(k).$$
Also, it is easy to see that
$$|\hat\mu_N(k)|\lesssim {|k|^{-1}}$$
for $N=1,\cdots,N_0$. So we can conclude, by the estimates above,
\begin{align*}
|\hat\mu(k)|
&\le |\hat\mu_1(k)|+\sum_{N=2}^\infty |\hat\mu_N(k)-\hat\mu_{N-1}(k)|
\lesssim {|k|^{-\alpha/2}}
\end{align*}
for all $k\in\mathbb Z, k\neq 0$. The estimate \eqref{eq:fourier-decay-2} for general $\xi$ then follows from the fact that $\mu$ is compactly supported in $(0,1)$. See e.g. \cite[p.~252]{kahane}.

\subsection{Regularity estimate}\label{sec:regularity-2}
To finish the proof of Theorem \ref{thm:theorem-2}, it remains to show \eqref{eq:regularity-2}. We split the proof into two parts.

\begin{lemma}\label{lem:upper-regular-2}
For all intervals $I$ with $|I|<1/2$,
$$\mu(I)\lesssim \frac{|I|^\alpha}{\log(1/|I|)}.$$
\end{lemma}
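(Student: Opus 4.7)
The plan is to pass from $\mu$ to the approximating measures $\mu_M$ at the dyadic scale closest to $|I|$. Given $I$ with $|I|<1/2$, I would pick the unique integer $N\ge 1$ with $2^{-N-1}\le |I|<2^{-N}$, enclose $I$ in an open interval $U$ of length strictly less than $2\cdot 2^{-N}$, and observe that $U$ is covered by at most three consecutive closed level-$N$ dyadic intervals of the form $J_j:=[j/2^N,(j+1)/2^N]$, $j\in\{0,1,\ldots,2^N-1\}$. The main step is then to bound $\mu_M(J_j)$ uniformly in $M\ge N$.

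The first thing I would carry out is this mass computation. By the inductive branching definition of $A_M$, the elements of $A_M$ lying inside $J_j$ are exactly the descendants of $j$: when $j\in A_N$ there are $t_{N+1}t_{N+2}\cdots t_M=T_M/T_N$ of them, and when $j\notin A_N$ there are none. Since $\mu_M$ is the uniform probability measure on a disjoint union of $T_M$ intervals of length $2^{-M}$, each such component carries mass $1/T_M$, and therefore
\[
\mu_M(J_j)=\frac{T_M/T_N}{T_M}=\frac{1}{T_N}\quad\text{if }j\in A_N,\qquad \mu_M(J_j)=0\quad\text{otherwise.}
\]
Summing over the (at most three) indices that cover $U$ yields $\mu_M(U)\le 3/T_N$ uniformly in $M\ge N$.

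Next, the portmanteau direction for weak convergence $\mu_M\to\mu$ applied to the open set $U$ gives $\mu(U)\le\liminf_{M\to\infty}\mu_M(U)\le 3/T_N$, hence $\mu(I)\le\mu(U)\le 3/T_N$. Finally, the choice of branching numbers in Section \ref{sec:choosing-2}, via the asymptotic \eqref{eq:asympt-2}, gives $T_N\approx 2^{N\alpha}N$, while $|I|\approx 2^{-N}$ yields $|I|^\alpha\approx 2^{-N\alpha}$ and $\log(1/|I|)\approx N$. Combining these gives $\mu(I)\lesssim 1/T_N\approx |I|^\alpha/\log(1/|I|)$, which is the desired estimate.

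The proof is essentially bookkeeping once the self-similar branching is exploited to reduce everything to one computation on a single level-$N$ dyadic interval. The one subtle point is to apply weak convergence to an open neighborhood $U$ of $I$ rather than to $I$ itself, which avoids any question of $\mu$ having atoms on the boundary of $I$; this is handled by the trivial enlargement of $I$. I do not anticipate a serious obstacle.
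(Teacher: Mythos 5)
Your proof is correct and follows essentially the same route as the paper: localize $|I|$ to a dyadic scale $2^{-N}$, observe that $I$ (or a small enlargement of it) meets only $O(1)$ level-$N$ intervals of the construction, each of which carries $\mu$-mass $T_N^{-1}$, and then use the choice $T_N\approx 2^{N\alpha}N$ from \eqref{eq:asympt-2}. The only cosmetic difference is that the paper works with $\mu$ directly on the (at most two) level-$N$ component intervals that meet $I$, while you make the passage from $\mu_M$ to $\mu$ explicit via portmanteau applied to an open enlargement $U$ (costing you a $3$ instead of a $2$, which is immaterial); this is a slightly more pedantic but perfectly valid way of handling the same step.
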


\begin{proof}
It suffices to prove the estimate for small intervals. Assume that
$$\frac{1}{2^{N+1}}\le |I|<\frac{1}{2^N}$$
and that there exists $a\in A_{N}$ such that the interval $(0,1/2^N)+a$ intersects with $I$. Since $|I|<1/2^N$, there are at most two such intervals, say $J_1$ and $J_2$. Now we can estimate
\begin{align*}
\mu(I)
&=\mu(I\cap J_1)+\mu(I\cap J_2)\ \text{ (since $\mu$ is supported on $E_N$)}\\
&\le \mu(J_1)+\mu(J_2)\\
&={2}{(t_1\cdots t_N)^{-1}}\  \hspace{1.34cm}\text{ (by the definition of $\mu$)}\\
&\approx \frac{2^{-N\alpha}}{N}\ \hspace{2.55cm}\text{ (by \eqref{want1-2})}\\
&\approx \frac{|I|^\alpha}{\log(1/|I|)}\ \hspace{1.85cm}\text{ (since $|I|\approx 2^{-N}$).}
\end{align*}
This finishes the proof.
\end{proof}

\begin{lemma}\label{lem:lower-regular-2}
For all intervals $I$ centered in $E$ with $|I|<1/2$,
$$\mu(I)\gtrsim \frac{|I|^\alpha}{\log(1/|I|)}.$$
\end{lemma}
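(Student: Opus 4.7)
My plan for the lower bound would dualize the strategy of Lemma \ref{lem:upper-regular-2}: instead of covering $I$ by at most two level-$N$ intervals, I would inscribe a single level-$N$ interval inside $I$ at the scale $|I|\approx 2^{-N}$, and then invoke the known $\mu$-mass of such intervals.

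Given $I$ centered at $x\in E$ with $|I|<1/2$, I would choose $N\ge 1$ to be the unique integer satisfying $2^{-N}\le |I|/2<2^{-N+1}$. Since $x\in E\subset E_N$, the point $x$ lies in some level-$N$ interval $J=[a,a+2^{-N}]$ with $a\in A_N$. Because both $x-a$ and $(a+2^{-N})-x$ lie in $[0,2^{-N}]\subseteq [0,|I|/2]$, the interval $J$ is contained in $I$; hence $\mu(I)\ge \mu(J)$.

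The next step is to identify $\mu(J)=1/T_N$. For any $M\ge N$, the deterministic nesting structure of the construction partitions $J$ into exactly $T_M/T_N = t_{N+1}\cdots t_M$ level-$M$ subintervals, each of $\mu_M$-mass $1/T_M$, so $\mu_M(J)=1/T_N$. Since $\mu$ is atom-free (its cumulative distribution function is the uniform limit of the continuous $F_N$, see Section \ref{sec:limit}), the two-point boundary $\partial J$ has $\mu$-measure zero, so weak convergence yields $\mu(J)=\lim_{M\to\infty}\mu_M(J)=1/T_N$.

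Finally, I would conclude by the calibration $T_N\approx 2^{N\alpha}N$ coming from \eqref{want1-2}. Combined with $N\approx \log_2(1/|I|)$, this gives
\[
\mu(I)\ge\mu(J)=\frac{1}{T_N}\approx\frac{2^{-N\alpha}}{N}\approx\frac{|I|^\alpha}{\log(1/|I|)},
\]
as desired. The only mildly delicate point, and the one I expect to need a sentence of care, is to handle the range $N\le N_0$ where the asymptotic $T_N\approx 2^{N\alpha}N$ is not meaningful (because the first $N_0$ branching numbers were chosen arbitrarily); but in this regime $|I|$ is bounded below by a positive constant, so $\mu(I)\ge\mu(J)\gtrsim 1\gtrsim |I|^\alpha/\log(1/|I|)$ and the finite discrepancy is absorbed into the implicit constant. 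This is essentially cosmetic; there is no substantive obstacle.
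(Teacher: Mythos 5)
Your proof is correct and is essentially the paper's argument: up to a shift of index in $N$, you inscribe exactly the same level interval $J\subset I$ around the center $x\in E$ and read off $\mu(J)=1/T_N\approx 2^{-N\alpha}/N\approx|I|^\alpha/\log(1/|I|)$ from the calibration \eqref{want1-2}. The only difference is that you make explicit two points the paper leaves implicit: the identification $\mu(J)=1/T_N$ via weak convergence and atom-freeness of $\mu$, and the observation that the finitely many scales $N\le N_0$ are absorbed into the implicit constant (which the paper handles by the opening remark ``it suffices to prove the estimate for small intervals'').
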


\begin{proof}
It suffices to prove the estimate for small intervals. Assume that
$$\frac{2}{2^{N+1}}\le |I|<\frac{2}{2^N}.$$
Since $I$ is centered in $E_{N+1}$, there exists $a\in A_{N+1}$ such that $J=(0,1/2^{N+1})+a$ is contained in $I$. Now we can estimate
\begin{align*}
\mu(I)\ge\mu(J)
&={(t_1\cdots t_{N+1})^{-1}}\ \text{ (by the definition of $\mu$)}\\
&\approx\frac{2^{-N\alpha}}{N}\ \hspace{1.4cm}\text{ (by \eqref{want1-2})}\\
&\approx \frac{|I|^\alpha}{\log(1/|I|)}\ \hspace{0.7cm}\text{ (since $|I|\approx 2^{-N}$).}
\end{align*}
This finishes the proof.
\end{proof}

\section{Proof of Theorem \ref{thm:theorem-3}}\label{sec:theorem-3}

The construction of $E$ and $\mu$ for Theorem \ref{thm:theorem-3} differs from the one for Theorem \ref{thm:theorem-2}  only in the way we choose $t_N$. The corresponding Fourier decay and regularity estimates are proved by similar arguments. We sketch here only the main steps of the proof.

\subsection{Choosing $t_N$ and $N_0$}\label{sec:choosing-1}
As before, we write $t_N=2^{\alpha}\theta_N$. Instead of \eqref{eq:asympt-2}, we now want
\begin{align}\label{eq:asympt-3}
1\approx \theta_1\cdots\theta_N.
\end{align}
For the same reason as before, this can be guaranteed by choosing appropriate $t_N\in\{1,2\}$. Similarly, we then choose $N_0$ so large that \eqref{eq:bernstein-cond-2} is satisfied. Note that by our choice of $t_N$, instead of \eqref{want1-2} we now have
\begin{align}\label{want1-3}
{T_{N-1}^{-1/2}}\log^{1/2}(2^N 8)\approx \frac{N^{1/2}}{2^{N\alpha/2}}.
\end{align}

\subsection{Fourier decay estimate}\label{sec:fourier-decay-3}
By \eqref{eq:key-est-pre-2} and \eqref{want1-3}, we have
$$\Big|\frac{1}{T_{N-1}}\sum_{a\in A_{N-1}}\chi_a(k)\Big|\lesssim \frac{N^{1/2}}{2^{N\alpha/2}}.$$
By the same argument as in Section \ref{sec:fourier-decay-2}, to prove \eqref{eq:fourier-decay-3} it suffices to show
\begin{align*}
\sum_{N=1}^\infty\min\Big(1,\frac{2^N}{|k|}\Big)\frac{N^{1/2}}{2^{N\alpha/2}}
\lesssim \frac{\log^{1/2}|k|}{|k|^{\alpha/2}}
\end{align*}
for all $|k|\ge 2$. To this end, we write, as before,
\begin{align*}
\sum_{N=1}^\infty\min\Big(1,\frac{2^N}{|k|}\Big)\frac{N^{1/2}}{2^{N\alpha/2}}
&= \sum_{2^N\le |k|}\frac{2^N}{|k|}\frac{N^{1/2}}{2^{N\alpha/2}}
+\sum_{2^N>|k|}\frac{N^{1/2}}{2^{N\alpha/2}}\\
&= \frac{1}{|k|}\sum_{N\le N_1} N^{1/2}2^{N(1-\alpha/2)}+\sum_{N\ge N_2}\frac{N^{1/2}}{2^{N\alpha/2}}
\end{align*}
where $N_1$ and $N_2$ are as in Section \ref{sec:fourier-decay-2}. Since
$$\sum_{N\le N_1}N^{1/2} 2^{N(1-\alpha/2)}\lesssim N_1^{1/2} 2^{N_1(1-\alpha/2)}$$
and
$$\sum_{N\ge N_2}\frac{N^{1/2}}{2^{N\alpha/2}}\lesssim \frac{N_2^{1/2}}{2^{N_2\alpha/2}},$$
the desired estimate follows from the observation that
$$N_1^{1/2} 2^{N_1(1-\alpha/2)}\lesssim {|k|^{1-\alpha/2}\log^{1/2}|k|}$$
and
$$\frac{N_2^{1/2}}{2^{N_2\alpha/2}}\lesssim \frac{\log^{1/2}|k|}{|k|^{\alpha/2}}.$$

\subsection{Regularity estimate}
By the same argument as in Section \ref{sec:regularity-2}, the proof of \eqref{eq:regularity-3} reduces to the fact that
$$t_1\cdots t_N\approx t_1\cdots t_{N+1}\approx 2^{N\alpha},$$
which is an immediate consequence of \eqref{eq:asympt-3}.

\section{Proof of Theorem \ref{thm:theorem-1}}\label{sec:theorem-1}
To obtain the additional property \eqref{eq:sharpness} in Theorem \ref{thm:theorem-1}, we will follow the idea of Hambrook and \L aba \cite{hambrook-laba}, that is, to build a highly structured ``subtree'' $\{P_N\}$ into the ``random tree'' $\{A_N\}$ as constructed above. In order to show \eqref{eq:sharpness} in the indicated range of $q$, we need the ``branching number'' of $\{P_N\}$ to grow with $N$ overall. This requires the ``ambient tree'' to branch at growing rate and therefore results in a slight weakening the of regularity estimates as seen in \eqref{eq:regularity} and \eqref{eq:regularity-4}.

\subsection{Outline of the construction}
Given $0<\beta\le\alpha<1$ and $\phi$ as in Theorem \ref{thm:theorem-1}, we write
$$\psi(N)=\lceil\phi(2^{N})^{1/2}\rceil+2$$
and
$$\Psi(N)=\prod_{n=1}^N\psi(n).$$
Here $\psi(N)$ will indicate the ``branching number'' of the ``ambient tree'' as $2$ does in the previous construction.

Given two sequences $\{\tau_N\}_{N\ge1}$ and $\{t_N\}_{N\ge1}$ (to be chosen in Section \ref{sec:choosing-3}) with
$$1\le\tau_N\le t_N<\psi(N),$$
let $P_0,P_1,\cdots$ be the uniquely determined sequence of sets with $P_0=\{0\}$ and
$$P_N=\bigcup_{a\in P_{N-1}}\{1,\cdots,\tau_N\}/\Psi(N)+a.$$
We will construct inductively another sequence of sets $A_0, A_2, \cdots$ with
\begin{align*}
& \bullet\ \ A_0=\{0\}\\
& \bullet\ \ A_N=\bigcup_{a\in A_{N-1}}A_{N,a}+a\\
& \bullet\ \ A_{N,a}\subset [\psi(N)]/\Psi(N)\\
& \bullet\ \ \# A_{N,a}=t_N \\
& \bullet\ \ A_N\supset P_N.
\end{align*}
As before, we write
$$E_N=\bigcup_{a\in A_{N}}[0,1/\Psi(N)]+a$$
$$\mu_N=\frac{1}{|E_N|}\mathds{1}_{E_N}(t)dt$$
and let
$$E=\bigcap_{N=1}^\infty E_N$$
$$\mu=\lim_{N\rightarrow\infty} \mu_N.$$
In addition, we let
$$f_N=\mathds{1}_{F_N}$$
where
$$F_N=\bigcup_{a\in P_{N}}[0,1/\Psi(N))+a.$$

\subsection{The iteration}\label{sec:construction}
Given a large integer $N_0$ (to be chosen in Section \ref{sec:choosing-3}), we choose $A_1, \cdots,A_{N_0}$ arbitrarily as long as $E_{N_0}$ is compactly supported in $(0,1)$. Assuming that $A_1,\cdots, A_{N-1}$ have been chosen, we now choose $A_{N}$.

As before, let
$$B_N=\{0,1,\cdots,t_N-1\}/\Psi(N).$$
For $x\in [\psi(N)]$, let
$$B_{N,x}=\Big\{{y+x\ (\text{mod } \psi(N))}: y/\Psi(N)\in B_N\Big\}/{\Psi(N)}\subset [\psi(N)]/\Psi(N).$$
By the same argument as in Section \ref{sec:iteration}, we can find $x(a)$ for each $a\in A_{N-1}$ such that, writing
$$\chi_a(k)=\Big(\frac{S_{B_N,x(a)}(k)}{t_N}-\frac{S_{[\psi(N)]/\Psi(N)}(k)}{\psi(N)}\Big)e^{-2\pi iak}$$
and
$$T_{N}=\#A_{N}=t_1\cdots t_N,$$
the estimate
\begin{align}\label{eq:key-est-pre}
\Big|\frac{1}{T_{N-1}}\sum_{a\in A_{N-1}}\chi_a(k)\Big|
\le 4{T_{N-1}^{-1/2}}\log^{1/2}(8\Psi(N))
\end{align}
holds for all $k\in\mathbb Z$, provided that
\begin{equation}\label{eq:bernstein-cond}
4\log(8\Psi(N))\le T_{N-1}
\end{equation}
which will be guaranteed by our choice of $t_N$ and $N_0$ (see Section \ref{sec:choosing-3}).

The additional procedure now is to modify $B_{N,x(a)}$ in order to build in $P_N$ and make $P_N$ suitably isolated in $A_N$. More precisely, for $a\in P_{N-1}$, we adjoin $\{1,\cdots,\tau_N\}/\Psi(N)$ to $B_{N,x(a)}$ and subtract a matching number of elements of $B_{N,x(a)}$ that are not in $\{1,\cdots,\tau_N\}/\Psi(N)$, so that the resulting set still has cardinality $t_N$. If $0$ or $(\tau_N+1)/\Psi(N)$ is in this new set, we further modify it so that these two points are not included while the cardinality of the set is unchanged and $\{1,\cdots,\tau_N\}/\Psi(N)$ is still included. Let $A_{N,a}$ be the set obtained.

For $a\notin P_{N-1}$, we let $A_{N,a}=B_{N,x(a)}$. Now set
$$A_N=\bigcup_{a\in A_{N-1}}A_{N,a}+a.$$
This finishes our choice of $A_N$.

Note that, if we write
$$\tilde\chi_a(k)=\Big(\frac{S_{A_{N,a}}(k)}{t_N}-\frac{S_{[\psi(N)]/\Psi(N)}(k)}{\psi(N)}\Big)e^{-2\pi iak},$$
then by the modification procedure described above, we have $\tilde\chi_a(k)=\chi_a(k)$ for $a\notin P_{N-1}$, and
$$|\tilde\chi_a(k)-\chi_a(k)|\le \frac{2\tau_N+4}{t_N}\le \frac{6\tau_N}{t_N}$$
for $a\in P_{N-1}$. Therefore
$$\Big|\frac{1}{T_{N-1}}\sum_{a\in A_{N-1}}\tilde\chi_a(k)-\frac{1}{T_{N-1}}\sum_{a\in A_{N-1}}\chi_a(k)\Big|
\le \frac{\#P_{N-1}}{T_{N-1}}\frac{6\tau_N}{t_N}
= \frac{6\tau_1\cdots\tau_N}{t_1\cdots t_N}.$$
Combining this with \eqref{eq:key-est-pre}, we see that
\begin{align}\label{eq:key-est}
\Big|\frac{1}{T_{N-1}}\sum_{a\in A_{N-1}}\tilde\chi_a(k)\Big|
\le 4{T_{N-1}^{-1/2}}\log^{1/2}(8\Psi(N))+\frac{6\tau_1\cdots\tau_N}{t_1\cdots t_N}
\end{align}
holds for all $k\in\mathbb Z$.

Note also that, given $l>N_0$, since $P_l$ is isolated in $A_l$, we can find a smooth function $\chi_{_l}$ so that
\begin{align}\label{eq:smooth-cutoff}
f_{l}\mu_N=\chi_{_l}\mu_N
\end{align}
holds for all $N\ge l$.

\subsection{{Choosing $\tau_N, t_N$ and $N_0$}}\label{sec:choosing-3}
Similar as before, we write
\begin{align}\label{eq:t_N}
t_N=\psi(N)^{\alpha}\theta_N \ \ \text{\ and\ }\ \ \tau_N=\psi(N)^{\alpha-\beta/2}\vartheta_N.
\end{align}
For sufficiently large $N$, say $N>M$, we can (and will) choose $t_N$ so that
$$1/4\le\theta_N\le 1/2 \ \ \text{\ or\ }\ \ 2\le\theta_N\le 4,$$
and similarly for  $\tau_N$ and $\vartheta_N$. Such an $M$ does exist since $\lim_{N\rightarrow\infty}\psi(N)=\infty$. For $N\le M$, we let $t_N=\tau_N=1$. We now choose $t_N$ and $\tau_N$ for $N>M$. From now on all implied constants depend only on $\alpha,\beta$ and $\phi$.

In view of \eqref{eq:key-est}, we want
\begin{align}\label{want1}
{T_{N-1}^{-1/2}}\log^{1/2}(8\Psi(N))\approx {\Psi(N)^{-\alpha/2}}
\end{align}
to hold for all $N>N_0$, which reduces to
\begin{align}\label{want1-11}
\psi(N)^{\alpha}\log(8\Psi(N))\approx \theta_1\cdots\theta_{N-1}.
\end{align}
Since the left hand side of \eqref{want1-11} grows mildly, we can indeed find suitable $\theta_N$ for $N>M$ so that \eqref{want1-11} holds for all $N$. This finishes our choice of $t_N$.

In view of \eqref{eq:key-est}, we also want
\begin{align}\label{want2}
\frac{\tau_1\cdots\tau_N}{t_1\cdots t_N}
\approx {\Psi(N)^{-\beta/2}}
\end{align}
to hold for all $N>N_0$, which reduces to
\begin{align}\label{eq:equiv-1}
\vartheta_1\cdots\vartheta_{N}
\approx \theta_1\cdots\theta_N.
\end{align}
By our choice of $t_N$, this is true if
\begin{align}\label{eq:equiv-2}
\vartheta_1\cdots\vartheta_{N}\approx\psi(N+1)^{\alpha}\log(8\Psi(N+1)).
\end{align}
But this can be guaranteed by choosing appropriate $\vartheta_N$ for $N>M$. This finishes our choice of $\tau_N$. Now one can clearly choose $N_0$ large enough so that \eqref{eq:bernstein-cond} is ensured for all $N>N_0$.

\subsection{Fourier decay estimate}\label{sec:fourier-decay-1}
Since $\beta\le\alpha$, by \eqref{eq:key-est}, \eqref{want1} and \eqref{want2},
$$\Big|\frac{1}{T_{N-1}}\sum_{a\in A_{N-1}}\tilde\chi_a(k)\Big|\lesssim {\Psi(N)^{-\beta/2}}$$
holds for all $N>N_0, k\in\mathbb Z$. Now that
\begin{align*}
|\hat\mu_N(k)-\hat\mu_{N-1}(k)|
&= \Big|\mathcal F\big(\Psi(N)\mathds{1}_{[0,1/\Psi(N))}\big)(k)\Big|\Big|\frac{1}{T_{N-1}}\sum_{a\in A_{N-1}}\tilde\chi_a(k)\Big|\\
&\lesssim \min\Big(1,\frac{\Psi(N)}{|k|}\Big){\Psi(N)^{-\beta/2}}
\end{align*}
for all $N>N_0$, $k\neq 0$, arguing as in Section \ref{sec:fourier-decay-2}, we see that
\begin{align}\label{fourier-decay-N}
|\hat\mu_N(k)|\lesssim {|k|^{-\beta/2}}
\end{align}
for all $N>N_0$, $k\neq 0$. This implies \eqref{eq:fourier-decay} for the same reason as before.

Note that, by \eqref{eq:smooth-cutoff} and \eqref{fourier-decay-N}, we also have
\begin{align}\label{dominating-function}
|\widehat{f_l\mu_N}(\xi)|
&=|\widehat{\chi_{_l}\mu_N}(\xi)|
\lesssim_{_l} {|\xi|^{-\beta/2}}
\end{align}
for all $N_0<l<N$ and $\xi\in\mathbb R, \xi\neq 0$.

\subsection{Regularity estimate}\label{sec:regularity}
As before, we split the proof of \eqref{eq:regularity} into two parts.

\begin{lemma}\label{regular-1}
For all intervals $I$ with $|I|<1/2$,
$$\mu(I)\lesssim \frac{|I|^\alpha}{\log(1/|I|)}.$$
\end{lemma}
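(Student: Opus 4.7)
The plan is to mimic Lemma \ref{lem:upper-regular-2}, replacing the constant branching factor $2$ by the variable factor $\psi(N)$. First I reduce to small intervals: since $\mu$ is a probability measure, the trivial bound $\mu(I)\le 1$ is already comparable to $|I|^\alpha/\log(1/|I|)$ whenever $|I|$ is bounded below by a positive constant depending on $N_0$. So I may assume $|I|<1/\Psi(N_0+1)$ and pick the unique integer $N>N_0$ with
\begin{align*}
\frac{1}{\Psi(N+1)} \;\le\; |I| \;<\; \frac{1}{\Psi(N)}.
\end{align*}

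With this choice, $|I|$ is strictly smaller than the common length $1/\Psi(N)$ of the ``bricks'' $[0,1/\Psi(N)]+a$, $a\in A_N$, composing $E_N$. Hence at most two such bricks, $J_1$ and $J_2$, meet $I$. Since $\mu$ is supported on $E_N$ and puts uniform mass $T_N^{-1}$ on each brick, this yields
\begin{align*}
\mu(I) \;\le\; \mu(J_1)+\mu(J_2) \;\le\; \frac{2}{T_N}.
\end{align*}

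It remains to convert $T_N^{-1}$ into $|I|^\alpha/\log(1/|I|)$. Writing $t_n=\psi(n)^\alpha\theta_n$ as in \eqref{eq:t_N} and applying \eqref{want1-11} at index $N+1$ gives $\theta_1\cdots\theta_N\approx \psi(N+1)^\alpha\log(8\Psi(N+1))$, so that
\begin{align*}
T_N \;=\; \Psi(N)^\alpha\,\theta_1\cdots\theta_N \;\approx\; \Psi(N+1)^\alpha\,\log(8\Psi(N+1)).
\end{align*}
Combining this with the elementary bounds $|I|^\alpha\ge \Psi(N+1)^{-\alpha}$ and $\log(1/|I|)\le \log\Psi(N+1)$, both immediate from $|I|\ge 1/\Psi(N+1)$, will deliver
\begin{align*}
\mu(I) \;\lesssim\; \frac{1}{\Psi(N+1)^\alpha\log(8\Psi(N+1))} \;\lesssim\; \frac{|I|^\alpha}{\log(1/|I|)}
\end{align*}
at once.

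There is no substantial obstacle here; the only bookkeeping point is to choose the correct level $N$ (the one at which the brick length still exceeds $|I|$) and to invoke \eqref{want1-11} at index $N+1$ rather than $N$, so that the trailing factor $\psi(N+1)^\alpha$ combines with $\Psi(N)^\alpha$ to produce exactly $\Psi(N+1)^\alpha\gtrsim |I|^{-\alpha}$, while the residual $\log(8\Psi(N+1))$ accounts precisely for the logarithmic loss in \eqref{eq:regularity}.
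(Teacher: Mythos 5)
Your proof is correct and follows essentially the same route as the paper: locate $N$ with $\Psi(N+1)^{-1}\le |I|<\Psi(N)^{-1}$, bound $\mu(I)\le 2T_N^{-1}$ by the two-brick argument, and then use the calibration of $T_N$ to convert this into $|I|^\alpha/\log(1/|I|)$. The only cosmetic difference is that you unpack \eqref{want1-11} via the factorization $T_N=\Psi(N)^\alpha\theta_1\cdots\theta_N$ to arrive at $T_N\approx\Psi(N+1)^\alpha\log(8\Psi(N+1))$, whereas the paper cites the equivalent condition \eqref{want1} at index $N+1$ directly; the initial reduction to $|I|<1/\Psi(N_0+1)$ is harmless but unnecessary since \eqref{want1-11} is arranged to hold for all $N$.
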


\begin{proof}
Assume that
$$\frac{1}{\Psi(N+1)}\le |I|<\frac{1}{\Psi(N)}$$
and that there exists $a\in A_{N}$ such that the interval $(0,1/\Psi(N))+a$ intersects with $I$. Since $|I|<1/\Psi(N)$, there are at most two such intervals, say $J_1$ and $J_2$. Now we can estimate
\begin{align*}
\mu(I)
&=\mu(I\cap J_1)+\mu(I\cap J_2)\  \hspace{0.1cm}\text{ (since $\mu$ is supported on $E_N$)}\\
&\le \mu(J_1)+\mu(J_2)\\
&={2}T_N^{-1}\  \hspace{2.8cm}\text{ (by the definition of $\mu$)}\\
&\approx \frac{\Psi(N+1)^{-\alpha}}{\log(8\Psi(N+1))}\ \hspace{1cm}\text{ (by \eqref{want1})}\\
&\le \frac{|I|^\alpha}{\log(1/|I|)}\ \hspace{2.0cm}\text{ (since $1/\Psi(N+1)\le |I|$).}
\end{align*}
This finishes the proof.
\end{proof}

\begin{lemma}\label{regular-2}
For all intervals $I$ centered in $E$ with $|I|<1/2$,
$$\mu(I)\gtrsim\frac{|I|^\alpha}{\phi(1/|I|)\log(1/|I|)}.$$
\end{lemma}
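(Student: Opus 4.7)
The plan mirrors Lemma \ref{lem:lower-regular-2}, with the non-constant branching of the ambient tree absorbed into the $\phi(1/|I|)$ factor. As in that lemma, it suffices to verify the estimate for sufficiently small $|I|$; fix $N\ge N_0$ to be the unique integer with $\frac{2}{\Psi(N+1)} \le |I| < \frac{2}{\Psi(N)}$. Since $I$ is centered at some $x \in E \subset E_{N+1}$ and $|I|/2 \ge 1/\Psi(N+1)$, one of the level-$(N+1)$ intervals $J := [0,1/\Psi(N+1)] + a$, $a \in A_{N+1}$, contains $x$ and is therefore contained in $I$. Hence
$$\mu(I) \ge \mu(J) = \frac{1}{T_{N+1}} \approx \frac{1}{\Psi(N+2)^\alpha \log(8\Psi(N+2))}$$
by \eqref{want1} applied at level $N+2$. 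The proof thus reduces to the estimate
$$\Psi(N+2)^\alpha \log(8\Psi(N+2)) \lesssim |I|^{-\alpha}\,\phi(1/|I|)\,\log(1/|I|).$$

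For the algebraic factor, I would factor $\Psi(N+2)^\alpha = \Psi(N)^\alpha [\psi(N+1)\psi(N+2)]^\alpha$, use $\Psi(N) \le 2/|I|$, and establish the key bound
$$\psi(N+1)\psi(N+2) \lesssim \phi(1/|I|).$$
This last estimate follows from $\psi(n) \lesssim \phi(2^n)^{1/2}$, two applications of the hypothesis $\phi(2t) \le \phi(t)+C$ yielding $\phi(2^{N+2}) \lesssim \phi(2^{N-1})$, and the inequality $1/|I| \ge \Psi(N)/2 \ge 2^{N-1}$ (which holds because every $\psi(n) \ge 2$) combined with monotonicity of $\phi$ to conclude $\phi(2^{N-1}) \le \phi(1/|I|)$. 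Since $\alpha < 1$ and $\phi \gtrsim 1$ once $N_0$ is large, this upgrades to $[\psi(N+1)\psi(N+2)]^\alpha \lesssim \phi(1/|I|)$.

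For the logarithmic factor, iterating $\phi(2t) \le \phi(t) + C$ gives the crude but crucial bound $\phi(t) \lesssim \log t$, so $\log\phi(1/|I|) \lesssim \log\log(1/|I|)$; writing $\log(8\Psi(N+2)) \le C + \log\Psi(N) + \log[\psi(N+1)\psi(N+2)]$ and invoking the previous step together with $\log\Psi(N) \le \log(2/|I|)$ then yields $\log(8\Psi(N+2)) \lesssim \log(1/|I|) + \log\phi(1/|I|) \lesssim \log(1/|I|)$.

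Combining these two bounds gives the required inequality, and hence $\mu(I) \gtrsim |I|^\alpha/[\phi(1/|I|)\log(1/|I|)]$. The main obstacle is controlling the ``scale gap'' $\psi(N+1)\psi(N+2)$ between consecutive levels of the tree, which is intrinsic to the non-constant branching: its bound by $\phi(1/|I|)$ is precisely what forces the appearance of the $\phi$ factor in \eqref{eq:regularity}, and reflects the unavoidable cost of letting the branching number grow with $N$.
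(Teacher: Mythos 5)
Your proof is correct and follows essentially the same route as the paper's. The only difference is bookkeeping: you apply \eqref{want1} at level $N+2$ to write $T_{N+1}^{-1}\approx\Psi(N+2)^{-\alpha}/\log(8\Psi(N+2))$ and then peel off the factor $[\psi(N+1)\psi(N+2)]^\alpha$, whereas the paper applies \eqref{want1} at level $N$ and separately bounds $t_N t_{N+1}\approx\psi(N)^\alpha\psi(N+1)^\alpha\lesssim\psi(N)^2\approx\phi(2^N)$, which goes to $\phi(1/|I|)$ slightly more directly (and also keeps $\log(8\Psi(N))$, which is immediately $\lesssim\log(1/|I|)$, avoiding your extra step for the logarithm at level $N+2$). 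Both versions hinge on the same two observations — $\mu(I)\ge\mu(J)=T_{N+1}^{-1}$, and the doubling hypothesis $\phi(2t)\le\phi(t)+C$ to compare $\psi$ at neighboring levels and transfer $\phi(2^{N})$ to $\phi(1/|I|)$ — so this is the same argument up to harmless rearrangement.
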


\begin{proof}
Assume that
$$\frac{2}{\Psi(N+1)}\le |I|<\frac{2}{\Psi(N)}$$
for some large $N$. Since $I$ is centered in $E_{N+1}$, there exists $a\in A_{N+1}$ such that $J=(0,1/\Psi(N+1))+a$ is contained in $I$. Now we can estimate
\begin{align*}
\mu(I)
&\ge \mu(J)\\
&=T_{N+1}^{-1}\ \hspace{4.1cm}\text{ (by the definition of $\mu$)}\\
&=T_{N-1}^{-1}\frac{1}{t_N t_{N+1}}\\
&\approx \frac{\Psi(N)^{-\alpha}}{\log(8\Psi(N))} \frac{1}{\psi(N)^\alpha \psi(N+1)^\alpha}\ \text{ (by \eqref{want1} and \eqref{eq:t_N})}\\
&\gtrsim \frac{\Psi(N)^{-\alpha}}{\log(\Psi(N)/2)} \frac{1}{\psi(N)^2}\ \hspace{1.65cm}\text{ (since $\alpha\le 1$)}\\
&\gtrsim \frac{|I|^{\alpha}}{\log(1/|I|)} \frac{1}{\phi(2^N)}\ \hspace{2.27cm}\text{ (since $|I|<2/\Psi(N)$)}\\
&\gtrsim \frac{|I|^{\alpha}}{\log(1/|I|)} \frac{1}{\phi(\Psi(N)/2)}\ \hspace{1.4cm}\text{ (since $2^N\le\Psi(N)$)}\\
&\gtrsim \frac{|I|^{\alpha}}{\log(1/|I|)} \frac{1}{\phi(1/|I|)}\ \hspace{1.95cm}\text{ (since $\phi$ is increasing)}.
\end{align*}
This finishes the proof.
\end{proof}

\subsection{Arithmetic progressions}
In this section we prepare for the proof of \eqref{eq:sharpness} following the arguments in \cite{hambrook-laba}. Roughly speaking, the idea is that the generalized arithmetic progressions $P_{l}$ embedded in $\mu$ give rise to large additive energy and therefore large Fourier transform of $f_l \mu$ in $L^q$.

More precisely, given $N_0<l<N$ and $r\in\mathbb Z_{\ge 1}$, we first deduce a lower bound for the additive energy
$$M_{l,N,r}=\#\Big\{(a_1,\cdots,a_{2r})\in \big(F_l\cap A_N\big)^{2r}: \sum_{j=1}^r a_j=\sum_{j=r+1}^{2r} a_j\Big\}.$$
By the Cauchy-Schwarz inequality, such a lower bound can be deduced from an upper bound on the size of the sum set
$$Z=\{a_1+\cdots+a_r: a_1,\cdots,a_r\in F_l\cap A_N\}.$$
To bound $\#Z$, notice that each $y\in F_l\cap A_N$ has a digit representation
$$y=\sum_{k=1}^l y^{(k)}/\Psi(k)+y^{(l+1)}/\Psi(N)$$
where $y^{(k)}\in \{1,\cdots,\tau_k\}$ for $k\le l$ and $y^{(l+1)}\in [\psi(l+1)\cdots \psi(N)]$. Thus $a_1+\cdots+a_r$ is of the form
$$\sum_{k=1}^l z^{(k)}/\Psi(k)+z^{(l+1)}/\Psi(N)$$
where $z^{(k)}\in \{r,r+1,\cdots, r\tau_k\}$ for $k\le l$ and
$z^{(l+1)}\in [r\psi(l+1)\cdots \psi(N)]$. From this we see that
\begin{align}\label{z-bound}
\#Z\le (r\tau_1)\cdots(r\tau_l)(r\psi(l+1)\cdots \psi(N))=r^{l+1}\tau_1\cdots\tau_l\frac{\Psi(N)}{\Psi(l)}.
\end{align}

For $z\in \mathbb Z/\Psi(N)$, if we write
$$g(z)=\#\{(y_1,\cdots,y_r)\in (F_l\cap A_N\big)^{r}, y_1+\cdots+y_r=z\},$$
then it is easy to see that
$$\|g\|_{\ell^1}=(\#F_l\cap A_N\big)^{r}=(\tau_1\cdots\tau_lt_{l+1}\cdots t_N)^r$$
and
$$\|g\|_{\ell^2}^2=M_{l,N,r}.$$
Combining these with $\|g\|_{\ell^1}^2\le \#Z\cdot\|g\|_{\ell^2}^2$ and \eqref{z-bound}, we obtain
\begin{align}\label{convolution}
M_{l,N,r}
\ge \frac{\|g\|_{\ell^1}^2}{\#Z}
\ge \frac{(\tau_1\cdots\tau_lt_{l+1}\cdots t_N)^{2r}{\Psi(l)}}{r^{l+1}\tau_1\cdots\tau_l{\Psi(N)}},
\end{align}
as desired.

We then use Salem's trick\footnote{One may also use the Plancherel theorem to obtain a similar lower bound.} to bound the $L^{2r}$ norm of $\widehat{f_l\mu_N}$ from below. To do this, notice that
$$f_l\mu_N=\frac{1}{T_N}\big(\sum_{a\in F_l\cap A_N}\delta_a\big)*\Psi(N)\mathds{1}_{[0,1/\Psi(N))}.$$
So, using basic properties of the Fourier transform, we have
\begin{align*}
\widehat{f_l\mu_N}(\xi)
&= e^{-\pi i\xi/\Psi(N)}\frac{1}{T_N}\Big(\sum_{a\in F_l\cap A_N}e^{-2\pi ia\xi}\Big)\mathcal F\Big(\mathds{1}_{[-1/2,1/2)}\Big)\big(\xi/\Psi(N)\big).
\end{align*}
Using the formula $|z|^2=z\bar z$, we can then write
\begin{align*}
\big|\widehat{f_l\mu_N}(\xi)\big|^{2r}
&= \frac{1}{T_N^{2r}}\Big|\sum_{a\in F_l\cap A_N}e^{-2\pi ia\xi}\Big|^{2r}
\Big[\mathcal F\big(\mathds{1}_{[-1/2,1/2)}\big)(\xi/\Psi(N))\Big]^{2r}\\
&= \frac{1}{T_N^{2r}}\Big(\sum_{a_1,\cdots,a_{2r}\atop\in F_l\cap A_N}e^{-2\pi i(\sum_{j=1}^r a_j-\sum_{j=r+1}^{2r} a_j)\xi}\Big)\mathcal F\big(\mathds{1}_{[-1/2,1/2)}^{*2r}\big)\big(\xi/\Psi(N)\big).
\end{align*}
Thus, by the Fourier inversion theorem and the positivity of $\mathds{1}_{[-1/2,1/2)}^{*2r}$,
\begin{align*}
&\ \ \ \ \Big\|\widehat{f_l\mu_N}\Big\|^{2r}_{L^{2r}(\mathbb R)}\\
&= \frac{1}{T_N^{2r}} \int_{\mathbb R} \sum_{a_1,\cdots,a_{2r}\atop\in F_l\cap A_N} e^{-2\pi i(\sum_{j=1}^r a_j-\sum_{j=r+1}^{2r} a_j)\xi}\mathcal F\big(\mathds{1}_{[-1/2,1/2)}^{*2r}\big)\big(\xi/\Psi(N)\big)d\xi\\
&= \frac{\Psi(N)}{T_N^{2r}} \sum_{a_1,\cdots,a_{2r}\atop\in F_l\cap A_N} \int_{\mathbb R} e^{-2\pi i(\sum_{i=j}^r a_j-\sum_{j=r+1}^{2r} a_j)\Psi(N)\xi}\mathcal F\big(\mathds{1}_{[-1/2,1/2)}^{*2r}\big)(\xi)d\xi\\
&= \frac{\Psi(N)}{T_N^{2r}} \sum_{a_1,\cdots,a_{2r}\atop\in F_l\cap A_N} \mathds{1}_{[-1/2,1/2)}^{*2r}\Big(\big(\sum_{j=1}^r a_j-\sum_{j=r+1}^{2r} a_j\big)\Psi(N)\Big)\\
&\ge \frac{\Psi(N)}{T_N^{2r}} M_{l,N,r} \mathds{1}_{[-1/2,1/2)}^{*2r}(0).
\end{align*}
Combining this with \eqref{convolution}, we obtain
\begin{align}
\Big\|\widehat{f_l\mu_N}\Big\|^{2r}_{L^{2r}(\mathbb R)}\notag
&\ge C_r\frac{\Psi(N)}{T_N^{2r}}\frac{(\tau_1\cdots\tau_lt_{l+1}\cdots t_N)^{2r}{\Psi(l)}}{r^{l+1}\tau_1\cdots\tau_l{\Psi(N)}}\\
&= C_r \frac{\Psi(l)(\tau_1\cdots\tau_l)^{2r-1}}{r^{l+1}T_l^{2r}}\label{norm-below}
\end{align}
where $C_r>0$ is a constant depending only on $r$.

\subsection{Failure of the extension estimate for $q<2+\frac{4(1-\alpha)}{\beta}$}\label{sec:failure}
We are now ready to prove \eqref{eq:sharpness}. Arguing as in \cite{hambrook-laba}, we will fix $r\in\mathbb Z$ with
$$r>\frac{1}{\beta}\ \ \text{ and }\ \ 2r>2+\frac{4(1-\alpha)}{\beta}.$$
First we show that the lower bound \eqref{norm-below} extends to the limiting measure $\mu$. To see this, notice that since $f_l\mu_N$ converges to $f_l\mu$ weakly, we have
$$\lim_{N\rightarrow\infty}\widehat{f_l\mu_N}(\xi)=\widehat{f_l\mu}(\xi)$$
for all $\xi\in\mathbb R$. On the other hand, by \eqref{dominating-function} and our choice of $r$, $|\widehat{f_l\mu_N}(\xi)|^{2r}$ is dominated by an integrable function independent of $N>l$. So, by the dominated convergence theorem, we can let $N\rightarrow\infty$ in \eqref{norm-below} and conclude
\begin{align}\label{eq:lower}
\Big\|\widehat{f_l\mu}\Big\|^{2r}_{L^{2r}(\mathbb R)}
\ge C_r \frac{\Psi(l)(\tau_1\cdots\tau_l)^{2r-1}}{r^{l+1}T_l^{2r}}
\end{align}
for all $l>N_0$, as desired.

Next, we turn this lower bound into a lower bound for $\|\widehat{f_l\mu}\|_{L^{q}}$. To do this, suppose $2\le q<2r$. Then by writing $|z|^{2r}=|z|^{q}|z|^{2r-q}$, we have
\begin{align*}
\Big\|\widehat{f_l\mu}\Big\|^{2r}_{L^{2r}(\mathbb R)}
&\le \Big\|\widehat{f_l\mu}\Big\|^{q}_{L^{q}(\mathbb R)}
\Big\|\widehat{f_l\mu}\Big\|^{2r-q}_{L^\infty(\mathbb R)}\\
&\le \Big\|\widehat{f_l\mu}\Big\|^{q}_{L^{q}(\mathbb R)}
\big\|f_l\mu\big\|^{2r-q}_{\mathcal M(\mathbb R)}\\
&= \Big\|\widehat{f_l\mu}\Big\|^{q}_{L^{q}(\mathbb R)}
\Big(\frac{\tau_1\cdots\tau_l}{t_1\cdots t_l}\Big)^{2r-q}.
\end{align*}
Combining this with \eqref{eq:lower}, we get
\begin{align}
\Big\|\widehat{f_l\mu}\Big\|^{q}_{L^{q}(\mathbb R)}\notag
&\ge \Big\|\widehat{f_l\mu}\Big\|^{2r}_{L^{2r}(\mathbb R)}
\Big(\frac{t_1\cdots t_l}{\tau_1\cdots\tau_l}\Big)^{2r-q}\\
&\ge C_r \frac{\Psi(l)(\tau_1\cdots\tau_l)^{q-1}}{r^{l+1}T_l^{q}}\label{norm-below-q}
\end{align}
for all $l>N_0$, as desired.

Now suppose for contradiction that there exists a constant $C_q$ such that
\begin{align}\label{contradiction}
\Big\|\widehat{f_l\mu}\Big\|^{q}_{L^{q}(\mathbb R)}
\le C_q\|f_l\|_{L^2(\mu)}^q
\end{align}
holds for all $l>N_0$. Then from \eqref{norm-below-q}, we deduce
\begin{align*}
C_r \frac{\Psi(l)(\tau_1\cdots\tau_l)^{q-1}}{r^{l+1}T_l^{q}}
&\le C_q\|f_l\|_{L^2(\mu)}^q\\
&= C_q\Big(\frac{\tau_1\cdots \tau_l}{t_1\cdots t_l}\Big)^{q/2}.
\end{align*}
Using the notation in Section \ref{sec:choosing-3}, this means
\begin{align*}
C_{q,r}{\Psi(l)}^{1-\alpha q/2+(\alpha-\beta/2)(q/2-1)}
\le {r^{l+1}}\frac{(\theta_1\cdots \theta_l)^{q/2}}{(\vartheta_1\cdots\vartheta_l)^{q/2}}(\vartheta_1\cdots\vartheta_l).
\end{align*}
By \eqref{eq:equiv-1} and \eqref{eq:equiv-2}, this implies
\begin{align}\label{eq:contradiction}
C_{q,r}{\Psi(l)}^{1-\alpha q/2+(\alpha-\beta/2)(q/2-1)}
&\lesssim r^{l+1} \psi(l+1)^{\alpha}\log\big(8\Psi(l+1)\big).
\end{align}
If $2\le q<2+\frac{4(1-\alpha)}{\beta}$, then
$$1-\alpha q/2+(\alpha-\beta/2)(q/2-1)>0.$$
So the left hand side of \eqref{eq:contradiction} grows faster than any geometric growth as $l\rightarrow\infty$ (since $\lim_{N\rightarrow\infty}\psi(N)=\infty$) whereas the right hand side of \eqref{eq:contradiction} is bounded by a geometric growth in $l$. This gives a contradiction to \eqref{contradiction}, and therefore shows that, for $2\le q<2+\frac{4(1-\alpha)}{\beta}$, we have
$$\sup_{l\ge 1}\frac{\|\widehat{f_l\mu}\big\|_{L^{q}(\mathbb R)}}{\|f_l\|_{L^2(\mu)}}=\infty.$$
This completes the proof of the first part of Theorem \ref{thm:theorem-1}.

\subsection{Proof of the second part}
We now turn to the second part of Theorem \ref{thm:theorem-1}. Similar as in the proof of Theorem \ref{thm:theorem-3}, the desired properties will follow from adjusting $\tau_N$ and $t_N$ in Section \ref{sec:choosing-3} for $N>M$.

More precisely, instead of \eqref{want1} we now want
\begin{align}\label{want3}
{T_{N-1}^{-1/2}}\approx {\Psi(N)^{-\alpha/2}}
\end{align}
to hold for all $N>N_0$, which reduces to
$$\psi(N)^{\alpha}\approx \theta_1\cdots\theta_{N-1}.$$
For the same reason as before, this can be guaranteed for all $N$ by choosing appropriate $\theta_N$ for $N>M$. This finishes our choice of $t_N$.

We will also want \eqref{want2} to hold for all $N>N_0$. As before, this reduces to \eqref{eq:equiv-1}, and therefore to
\begin{align}\label{eq:equiv-3}
\vartheta_1\cdots\vartheta_{N}\approx\psi(N+1)^{\alpha}.
\end{align}
But this can be guaranteed by choosing appropriate $\vartheta_N$ for $N>M$. This finishes our choice of $\tau_N$.

To show the Fourier decay property \eqref{eq:fourier-decay}, notice that since
$${\Psi(N)^{-\alpha/2}}{\log^{1/2}(8\Psi(N))}\lesssim {\Psi(N)^{-\beta/2}},$$
by \eqref{eq:key-est}, \eqref{want3} and \eqref{want2} we still have
$$\Big|\frac{1}{T_{N-1}}\sum_{a\in A_{N-1}}\tilde\chi_a(k)\Big|\lesssim {\Psi(N)^{-\beta/2}}$$
for all $N>N_0, k\in\mathbb Z$. \eqref{eq:fourier-decay} then follows from the same argument as in Section \ref{sec:fourier-decay-1}.

The second part of the regularity property \eqref{eq:regularity-4} follows from the proof of Lemma \ref{regular-1}, except that we now estimate, by \eqref{want3},
$$T_N^{-1}\approx \Psi(N+1)^{-\alpha}\lesssim |I|^{\alpha}.$$
The first part of \eqref{eq:regularity-4} follows from the proof of Lemma \ref{regular-2}, except that we now estimate, again by \eqref{want3},
$$T_{N-1}^{-1}\approx \Psi(N)^{-\alpha}\gtrsim {|I|^\alpha}.$$

Finally, the proof of \eqref{eq:sharpness} goes in the same way as in Section \ref{sec:failure}, except that we now arrive at, instead of \eqref{eq:contradiction},
\begin{align}\label{eq:contradiction-2}
C_{q,r}{\Psi(l)}^{1-\alpha q/2+(\alpha-\beta/2)(q/2-1)}
&\lesssim r^{l+1} \psi(l+1)^{\alpha}.
\end{align}
Here we have used \eqref{eq:equiv-1} and \eqref{eq:equiv-3}. If $2\le q< 2+\frac{4(1-\alpha)}{\beta}$, then \eqref{eq:contradiction-2} gives a contradiction to \eqref{contradiction} for the same reason as before. This shows \eqref{eq:sharpness}, and the proof of Theorem \ref{thm:theorem-1} is complete.

\subsection{Remarks} (1) The restriction $t\le 1$ in Lemma \ref{lem:bernstein} can be removed. Consequently, the introduction of the large integer $N_0$ can likely be eliminated. We thank Kyle Hambrook for pointing this out.

(2) If $\mu$ is as in the second part of Theorem \ref{thm:theorem-1} where $\beta<\alpha$, then the exponent $\beta$ in \eqref{eq:fourier-decay} is optimal, since otherwise \eqref{eq:sharpness} would be violated according to \eqref{eq:regularity-4} and the $L^2$-Fourier restriction theorem.

(3) Instead of using the trick in Section \ref{sec:choosing-3}, one may want to employ an $M$-out-of-$N$ construction with growing $N$ and suitably rounded $M$. But this does not work out efficiently since the rounding errors can accumulate into a big scale, as exemplified by the following elementary estimate:
$$e^{C_1 L^{1-\alpha}}\le \prod_{N=1}^L\frac{\lceil N^\alpha\rceil}{N^\alpha}\le e^{C_2 L^{1-\alpha}}.$$

\section{Acknowledgement}
The author would like to thank Andreas Seeger for many helpful suggestions that improved the content and presentation of this paper.

\end{document}